\theoremstyle{plain}
\newtheorem{theorem}{Theorem}[section]
\newtheorem{lemma}[theorem]{Lemma}
\newtheorem{prop}[theorem]{Proposition}
\theoremstyle{definition}
\newtheorem{defin}[theorem]{Definition}
\newtheorem{remark}[theorem]{Remark}
\DeclareMathOperator{\gr}{\ensuremath{gr}}
\DeclareMathOperator{\im}{\ensuremath{im}}
\DeclareMathOperator{\Mod}{\ensuremath{-Mod}}
\DeclareMathOperator{\Hom}{\ensuremath{Hom}}
\DeclareMathOperator{\Pic}{\ensuremath{Pic}}
\DeclareMathOperator{\Aut}{\ensuremath{Aut}}
\DeclareMathOperator{\Sym}{\ensuremath{Sym}}
\DeclareMathOperator{\Id}{\ensuremath{Id}}
\begin{document}
\title{Morita equivalence problem for symplectic reflection algebras}
\author{Akaki Tikaradze}
\email{Akaki.Tikaradze@utoledo.edu}
\address{University of Toledo, Department of Mathematics \& Statistics, 
Toledo, OH 43606, USA}

\begin{abstract}
In this paper we fully solve the Morita equivalence problem for 
symplectic reflection algebras associated to direct products of finite subgroups of $SL_2(\mathbb{C})$. Namely given  a pair of such symplectic reflection algebras $H_c, H_{c'}$,
 then $H_c$ is Morita equivalent to $H_c'$ if and only if they are related by
 a standard Morita equivalence. We also establish new cases for Morita classification problem for type A rational Cherednik algebras. Our approach crucially relies on the reduction modulo large primes.
\end{abstract}

\maketitle

\section{Introduction}

 Symplectic reflection algebras were introduced by Etingof and Ginzburg \cite{EG}. Let us recall their definition.
 Let $V$ be a symplectic vector space over $\mathbb{C}$ with the symplectic form $\omega$, let $\Gamma\subset Sp(V)$ be a finite subgroup.
 Then an element $\gamma\in\Gamma$ is said to be a symplectic reflection if $\im(Id-\gamma)=2$. Denote by $\omega_{\gamma}$ the restriction of $\omega$ on
 $\im(Id-\gamma).$ Let $S\subset\Gamma$ be the set of all symplectic reflections, then $S$ is $\Gamma$-conjugation invariant.
  Let $c:S\to\mathbb{C}$ be a $\Gamma$-invariant function, and $t\in\mathbb{C}.$ Then the symplectic reflection algebra
  $H_{t, c}(\Gamma, V)=H_{t, c}$ corresponding to parameters $(t, c)$ is defined as the quotient of 
  the smash-product algebra $T(V)\# \Gamma$
  by the following relations
  $$[v, w]=t[v, w]+\sum_{\gamma\in \S}w_{\gamma}(v, w)\gamma,\quad v, w\in V.$$
The spherical subalgebra $B_{t, c}$ of $H_{t, c}$ is defined as $eH_{t,c}e$, where $e=\frac{1}{|\Gamma|}\sum_{g\in\Gamma}g.$
From now on we put $t=1$ and abbreviate $H_{1, c}$ (respectively $B_{1, c}$) by $H_c$ (resp. $B_c).$
Both algebras $H_c, B_c$  can be equipped with a standard filtration and their PBW property states that 
$$\gr(H_c)=\Sym(V)\#\Gamma,\quad \gr(B_c)=\Sym(V)^{\Gamma}.$$
Thus, algebras $B_c$ can be thought of as noncommutative deformations of a singular Poisson variety $V/\Gamma.$
Also, algebras $H_c$ represent multiparameter deformations of the smash product $A(V)\#\Gamma$, where $A(V)$ denoted the Weyl algebra generated by $V.$

For a fixed pair $V, \Gamma\leq Sp(V)$ as above, it is an interesting problem to classify all Morita equivalent pairs of symplectic reflection algebras
$H_c, H_{c'}.$ To the best of our knowledge, the following (short) list of results is a full one in this direction. If $\dim (V)=2$ and $\Gamma=\mathbb{Z}/2\mathbb{Z}$,
then algebras $B_c$ are isomorphic to maximal primitive quotients of $U(\mathfrak{sl}_2)$, in which case the Morita equivalence problem 
was solved by T. Hodges \cite{H1}. In the case of the symmetric group $\Gamma=S_n$ and $V=\mathfrak{h}\oplus\mathfrak{h}^*$ (where $\mathfrak{h}$ is the standard $n-1$-dimensional
representation), the problem was solved by Berest, Etingof and Ginzburg \cite{BEG} for very generic values of parameter $c$ (algebraically independent over 
$\mathbb{Q}$).
Some partial results about Morita classification of classical generalized Weyl algebras
were obtained in \cite{RS}
Finally, for $\dim(V)=2$ and an arbitrary $\Gamma$, very generic cases of the Morita classification problem was resolved in \cite{T}.

For $\dim(V)=2$ and $\Gamma\leq SL_2(\mathbb{C})$, corresponding symplectic reflection algebras along with their spherical subalgebras (Noncommutative deformations of Kleinian singularities) were introduced 
by Crawley-Boevey and Holland
\cite{CBH}. A fundamental feature of their work was the introduction of deformed preprojective algebras of quivers and an isomorphism between $H_{c}$ and
the corresponding deformed preprojective algebras of the affine Dynkin quiver corresponding to $\Gamma$ via the Mckay correspondence.
A key tool in their study of deformed projective algebras were certain reflection functors which induce Morita equivalence between algebras with different parameters.

The following is the main result of this paper.

\begin{theorem}\label{main}
Let $V=\bigoplus_{i=1}^nV_i$ with $V_i=\mathbb{C}^2$ and $\Gamma_i\leq SL_2(\mathbb{C}).$
Let $\Gamma=\prod_i^n\Gamma_i\leq Sp(V).$ Let $U$ denote the space of parameters corresponding
to symplectic reflection algebras of the pair $(\Gamma, V).$ Denote by $W_{ext}$ the subgroup of affine transformations
of $U$ generated by reflections, diagram automorphisms and conjugations by the normalizer of $\Gamma$ in $Sp(V).$
Then $H_{c}$ is Morita equivalent to $H_{c'}$ if and only if there exists $\gamma\in W_{ext}$ such that
$c'=\gamma(c).$


\end{theorem}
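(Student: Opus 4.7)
\emph{The ``if'' direction} is the collection of known Morita equivalences. The Crawley--Boevey--Holland reflection functors realize the simple reflections of the affine Weyl group on each factor, the diagram automorphisms come from outer automorphisms of $\Gamma_i$ that extend to $SL_2(\mathbb{C})$ and give honest algebra automorphisms of $H_c$, and conjugation by the normalizer of $\Gamma$ in $Sp(V)$ tautologically yields isomorphisms $H_c\cong H_{c'}$ (including permutations of isomorphic factors $\Gamma_i$). These operations combine to realize the full $W_{ext}$-action by Morita equivalences, reducing us to the nontrivial ``only if'' direction.

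\emph{The ``only if'' direction: reduction modulo $p$.} Suppose an $(H_c,H_{c'})$-bimodule $M$ implements a Morita equivalence. Choose a finitely generated $\mathbb{Z}$-subring $R\subset\mathbb{C}$ over which $c$, $c'$, $H_c$, $H_{c'}$ and $M$ are all defined; for each maximal ideal $\mathfrak{p}\subset R$ of sufficiently large residue characteristic $p$, reduction modulo $\mathfrak{p}$ produces a Morita equivalence between $H_{c,\mathfrak{p}}$ and $H_{c',\mathfrak{p}}$ over the residue field. The decisive feature of positive characteristic is that $H_{c,\mathfrak{p}}$ is finite over its center: $p$-th powers of elements of $V$ lie centrally modulo lower terms, making $H_{c,\mathfrak{p}}$ PI of bounded degree and Azumaya on a dense open locus of $\spec Z(H_{c,\mathfrak{p}})$. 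The center carries a canonical Poisson bracket $\{a,b\}=p^{-1}[\tilde a,\tilde b]$ computed from characteristic-zero lifts, and Morita equivalence yields an isomorphism of Poisson algebras $Z(H_{c,\mathfrak{p}})\cong Z(H_{c',\mathfrak{p}})$ compatible with the identification of associated gradeds with $\Sym(V)^{\Gamma}\otimes\mathbb{F}_p$.

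\emph{Recovering $c$ from the Poisson center.} The Poisson scheme $X_c:=\spec Z(H_{c,\mathfrak{p}})$ is a Poisson deformation (parameterized by $c$ mod $\mathfrak{p}$) of a Frobenius twist of the minimal symplectic resolution $\widetilde{V/\Gamma}\to V/\Gamma$, which in our product setting decomposes as the product of the Kleinian resolutions of $V_i/\Gamma_i$. The cohomology class of the symplectic form on $X_c$ encodes $c$ modulo $\mathfrak{p}$, and any Poisson isomorphism $X_c\cong X_{c'}$ compatible with the projection to the Frobenius twist of $V/\Gamma$ must identify these classes by a geometric symmetry of the resolution. The classification of such symmetries (affine Weyl reflections, diagram automorphisms, and permutations of isomorphic factors) is precisely $W_{ext}$. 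Hence $c'\equiv\gamma_{\mathfrak{p}}(c)\pmod{\mathfrak{p}}$ for some $\gamma_{\mathfrak{p}}\in W_{ext}$, and since $W_{ext}$ is finitely generated while $\mathfrak{p}$ runs over a Zariski-dense family of primes, a pigeonhole argument produces a single $\gamma\in W_{ext}$ with $c'=\gamma(c)$ in characteristic zero.

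\emph{Main obstacle.} The crux is identifying the Poisson-automorphism group of $X_c$ with the image of $W_{ext}$: one must (i) control this automorphism group of the characteristic-$p$ deformation uniformly in $p$, and (ii) rule out Morita bimodules that mix the factors $V_i/\Gamma_i$ in ways outside the normalizer's action. The rigid description of $H_c$ as a deformed preprojective algebra on the disjoint union of affine Dynkin quivers, together with the product structure visible in $\Sym(V)^{\Gamma}$, should make both issues tractable.
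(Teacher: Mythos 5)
Your ``if'' direction and the opening of the ``only if'' direction (spread out over a finitely generated $\mathbb{Z}$-ring, reduce modulo large primes, use the large center and its Poisson bracket) match the paper's setup. But the heart of your argument fails at the concluding step. You derive, for each prime of reduction, a congruence $c'\equiv\gamma_{\mathfrak{p}}(c)\pmod{\mathfrak{p}}$ with $\gamma_{\mathfrak{p}}\in W_{ext}$, and then invoke ``pigeonhole'' because $W_{ext}$ is finitely generated. This does not work: $W_{ext}$ is an \emph{infinite} group (it contains the full translation lattice $\Lambda=\{\xi\in\mathbb{Z}^I:\xi\cdot\delta=0\}$), so finite generation gives no finiteness of the possible $\gamma_{\mathfrak{p}}$. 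Worse, the statement ``there exists $\gamma_{\mathfrak{p}}\in W_{ext}$ with $c'\equiv\gamma_{\mathfrak{p}}(c)\pmod{\mathfrak{p}}$'' is \emph{vacuously true} for every pair $c,c'$: as the paper's Proposition~\ref{Mitya} shows, the lattice $\Lambda$ surjects onto the whole hyperplane $\{\lambda\cdot\delta=0\}$ over $\mathbb{F}_p$, so modulo $p$ \emph{all} normalized parameters lie in a single $W_{ext}$-orbit. Any argument that only outputs an unconstrained $\gamma_{\mathfrak{p}}$ per prime therefore proves nothing.

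The missing idea is a $p$-independent invariant that pins $\gamma_{\mathfrak{p}}$ down to a finite set. The paper uses $K_0(F)\in\Aut(\mathbb{Z}^I)$, which is defined over $\mathbb{C}$ and hence the same for every reduction. Exploiting the fact that mod $p$ everything is Morita equivalent to $H_0=A(V)\#\Gamma$ via reflection functors whose effect on $K_0$ is known (Lemma~\ref{K_0}, Proposition~\ref{Mitya}), one conjugates $F$ into a Poisson-bracket-preserving element of $\Pic(A(V)\#\Gamma)$; Theorem~\ref{key} (proved via the Bezrukavnikov--Kaledin Azumaya splitting on the formal neighbourhood, lifting Poisson automorphisms of $\hat V/\Gamma$ along the \'etale cover, linearizing, and computing $\Pic^{id}\cong\Gamma^*$) shows its $K_0$ must be a diagram automorphism. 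This forces $\gamma_{\mathfrak{p}}=\rho\circ K_0(F)$ with $\rho$ ranging over the \emph{finite} set of diagram automorphisms, and only then does the density of primes (Theorem~\ref{Chebotarev}) upgrade the congruences to the identity $c'=\rho K_0(F)(c)$ over $\mathbb{C}$. Your proposal contains no substitute for this mechanism: the claim that the ``cohomology class of the symplectic form on $X_c$'' can be matched across a Poisson isomorphism only by a symmetry in $W_{ext}$ is both unproved (you flag it yourself as the ``main obstacle'') and, even if granted, would still leave you with an uncontrolled element of an infinite group at each prime.
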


Our result has a particularly simple formulation for type A case. Noncommutative deformations of type A singularities
are also known as 
(classical) generalized Weyl algebras, and as such they were introduced and studied by V.~Bavula,
T.~Hodges and others. Let us recall their definition.
Let $v\in \mathbb{C}[h]$.
Then the corresponding (classical) generalized Weyl algebra $A(v)$ is defined as a
quotient of $\mathbb{C}\langle x, y, h\rangle$
by the following relations
\[
[h, x]=x, \quad [h, y]=-y, \quad xy=v(h), \quad yx=v(h-1).
\]
The Morita classification problem for algebras $A(v)$ was posed and initially studied by T. Hodges.
The following result positively answers the question raised by him in \cite{H2}.

\begin{theorem}\label{GWA}

Let $v\in\mathbb{C}[h]$ be a monic polynomial of degree $n$ with distinct roots $t_1,\cdots, t_n.$
Let $v'\in\mathbb{C}[h]$ be a monic polynomial with distinct roots $t'_1,\cdots, t'_n.$
Then $A(v')$ is Morita equivalent to $A(v)$ if and only if there exist $\epsilon=\pm 1, \sigma\in S_n, c\in\mathbb{C}$
and $d\in\mathbb{Z}^n$ so that 
$$t'_i=\epsilon t_{\sigma(i)}+d_i+c,\quad 1\leq i\leq n.$$
\end{theorem}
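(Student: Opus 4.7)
The plan is to deduce Theorem \ref{GWA} from Theorem \ref{main} applied to the case of a single cyclic factor, $\Gamma = \mathbb{Z}/n\mathbb{Z} \leq SL_2(\mathbb{C})$. First I would invoke the Crawley-Boevey--Holland identification: for this $\Gamma$, the spherical subalgebra $B_c$ of $H_c$ is isomorphic to a generalized Weyl algebra $A(v_c)$, whose $n$ roots are explicit affine-linear functions of the components of $c$ together with one overall shift parameter. Under this identification, monic $v$ of degree $n$ with pairwise distinct roots correspond exactly to spherical parameters $c$, on which $B_c$ is Morita equivalent to $H_c$ via the progenerator $H_c e$. Consequently, for $v, v'$ with distinct roots, Morita equivalence of $A(v) \cong B_c$ and $A(v') \cong B_{c'}$ is equivalent to Morita equivalence of $H_c$ and $H_{c'}$.

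Theorem \ref{main} then yields that this happens precisely when $c' = \gamma(c)$ for some $\gamma \in W_{ext}$, so the remaining task is to unpack $W_{ext}$ for a single cyclic factor and translate its action on $c$ into the stated action $t'_i = \epsilon\, t_{\sigma(i)} + d_i + c$ on roots. Explicitly: the affine Weyl group of type $\tilde{A}_{n-1}$ acts on the roots by integer shifts in the coroot lattice $\{d \in \mathbb{Z}^n : \sum d_i = 0\}$; the cyclic rotations of the $\tilde{A}_{n-1}$ diagram extend this lattice to all of $\mathbb{Z}^n$ and, when combined with the finite Weyl group, produce arbitrary $\sigma \in S_n$ together with arbitrary $d \in \mathbb{Z}^n$; the order-two diagram involution and the normalizer element of $\mathbb{Z}/n\mathbb{Z}$ in $SL_2(\mathbb{C})$ inducing $g \mapsto g^{-1}$ both yield the sign flip $\epsilon = -1$ (and in fact coincide on parameters); and the continuous $c \in \mathbb{C}$ is the ambiguity in the identification $B_c \cong A(v_c)$, corresponding to the isomorphism $h \mapsto h + c$ of $A(v)$.

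The principal obstacle is the final, combinatorial step: one must fix a labeling of the vertices of the $\tilde{A}_{n-1}$ diagram and of the roots $t_i$, and carefully verify that every generator of $W_{ext}$ acts on the $t_i$ by a transformation of the stated form $(\epsilon, \sigma, d, c)$, without missing any Morita equivalence or over-counting by the overlap between the $\mathbb{C}$-shift and the diagonal $\mathbb{Z} \cdot \mathbf{1} \subset \mathbb{Z}^n$. A secondary technical point is confirming that the distinct-roots condition really is the spherical locus and that Morita equivalence between $B_c$ and $H_c$ on this locus intertwines Morita classes bijectively; both should follow from standard facts about spherical subalgebras once the match between the distinct-roots condition and sphericity is established.
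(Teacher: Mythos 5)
Your plan follows the paper's own route: identify $A(v)$ with $e_0\Pi^{\lambda}(Q)e_0$ for the cyclic quiver, use the distinct-roots (regularity) condition to upgrade the Morita equivalence of the spherical algebras to one of the algebras $H_{\lambda}$, invoke Theorem \ref{main}, and then translate the $W_{ext}$-action on $\lambda$ into the affine action $(\epsilon,\sigma,d,c)$ on the roots. The combinatorial step you flag as the main obstacle is exactly what the paper carries out, by writing $t_i(\lambda)=\sum_{j\le i}\lambda_j$, normalizing the last root to $1$ (which absorbs your overall shift $c$), and computing the effect of each reflection $r_i$ and of the two diagram automorphisms explicitly.
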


\section{Reflection functors and their action on $K_0$}

We start by briefly recalling the definition of deformed preprojective
algebras introduced by Crawley-Boevey and Holland \cite{CBH}. Let $Q$ be
an arbitrary finite quiver with the set of vertices $I$, and let
$\overline{Q}$ be its double. For each $i\in I$, denote by $e_i$ the
corresponding idempotent. Let $K$ be a field and $\lambda\in K^I.$ Then the deformed
preprojective algebra $\Pi^{\lambda}(Q)$ with parameter $\lambda$ is
defined as the quotient of the path algebra of $\overline{Q}$ over $K$ by the
following relations:
\[
\sum_{a\in Q}[a, a^*]=\sum_{i\in I} \lambda_ie_i.
\]

Next, we need to recall the reflection functors (for $\lambda_i\neq 0$)
\[
E_i:\Pi^{\lambda}(Q) \Mod \to \Pi^{r_i(\lambda)}(Q) \Mod
\]
satisfying $E_i^2={\rm Id}$, introduced by Crawley-Boevey and Holland
\cite{CBH}. First, they define the symmetrized Ringel form $(\cdot,\cdot)
: \mathbb{Z}^I \times \mathbb{Z}^I \to \mathbb{Z}$, via:
\[
(\alpha, \beta) = \langle \alpha, \beta \rangle + \langle \beta, \alpha
\rangle, \quad \text{where} \quad
\langle \alpha, \beta \rangle := \sum_{i \in I} \alpha_i \beta_i -
\sum_{a \in Q} \alpha_{t(a)} \beta_{h(a)}.
\]
Next, define a simple reflection for a given loop-free vertex $i\in I$ as
follows:
\[
s_i:\mathbb{Z}^{I}\to\mathbb{Z}^I, \quad s_i(\alpha)=\alpha-(\alpha,
\epsilon_i)\epsilon_i,
\]
where $\epsilon_i$ is the $i$-th coordinate vector, and the corresponding
reflection $r_i:K^I\to K^I$ defined with the property
that $r_i(\lambda)\cdot\alpha=\lambda\cdot s_i(\alpha),$ explicitly 
\[
r_i(\lambda)_j=\lambda_j-(\epsilon_i, \epsilon_j)\lambda_i.
\]
One now defines the Weyl group $W$ of $Q$ as the group of automorphisms of
$\mathbb{Z}^I$ generated by $s_i, i\in I.$
Note that $W$ acts on $K^I$ by
$w(\lambda)\cdot\alpha=\lambda\cdot w^{-1}(\alpha)$
for all $w\in W, \lambda\in K^I, \alpha\in \mathbb{Z}^{I}.$

We use the following standard terminology: $\lambda\in\mathbb{C}^I$ is said to be regular if $\lambda\cdot\alpha\neq 0$ for any Dynkin root
$\alpha$, while $\lambda$ is called generic if $\lambda\cdot\alpha\neq 0$ for any root $\alpha.$

Given a left $\Pi^{\lambda}$-module $M$, we put $M_i=e_iM$, where recall
that $e_i\in\Pi^{\lambda}$ denotes the idempotent of the vertex $i\in I.$
So, $M=\bigoplus_{i\in I}M_i.$ Then $(E_i(M))_j=M_j$ for $j\neq i$ and
$(E_i(M))_i$ is a direct summand of $\bigoplus_{a\in Q, h(a)=i}
M_{t(a)}.$ When $\lambda_i=0$ then we put $E_i=Id.$
By composing reflection functors, for any element of the Weyl group $w\in
W$, we get the functor, which is an equivalence of categories
\[
E_w:\Pi^{\lambda} \Mod \to \Pi^{w(\lambda)} \Mod.
\]

Now we recall how symplectic reflection algebras $H_{\lambda}$ (for $\dim(V)=2$) relate to
deformed preprojective algebras. 
So, $V=\mathbb{C}^2$ and $\Gamma$ a finite
subgroup of $SL_2(\mathbb{C}).$
Let $\lambda\in Z(\mathbb{C}[\Gamma]).$ 
Then
the corresponding symplectic reflection algebra is defined as follows:

$$H_{\lambda}=(\mathbb{C}\langle x, y\rangle\#\Gamma)/([x,
y]-\lambda).$$

Then the corresponding spherical subalgebras $\mathcal{O}^{\lambda}=eH_{\lambda}e$
(recall that $e=\frac{1}{|\Gamma|}\sum_{g\in\Gamma}g$) were introduced by Crawley-Boevey and Holland as noncommutative deformations
of Kleinian singularities.

Let $\mathbb{C}^I$ denote the space of parameters of $\lambda,$ so $I$ consists of vertices of the McKay quiver of $\Gamma$-isomorphism classes of irreducible $\Gamma$-representations.
Let $Q$ be the McKay graph of $\Gamma$
with arbitrary orientation. Thus vertices of $Q$ correspond to simple
representations $V_i$ of $\Gamma$ (the trivial module corresponds to
vertex 0), and with an arrow $i\to j$ if $\Hom_{\Gamma}(V\otimes V_i,
V_j)\neq 0.$ 
Then, an important result of \cite{CBH} asserts that 
$\Pi^{\lambda}(Q)$ is Morita equivalent to $H_{\lambda}$, and
$\mathcal{O}^{\lambda}\cong e_0\Pi^{\lambda}e_0.$
Here we first write $\lambda\in\mathbb{C}^I$ as
$\lambda=\sum_{i \in I} f_i \tau_i$, where
$\tau_i$ is the idempotent in
$Z(\mathbb{C}[\Gamma])$ corresponding to the simple $\Gamma$-module
$V_i.$ Now $\lambda$ has $i$-th entry $f_i\delta_i$, where $\delta_i :=
\dim(V_i)$.
Moreover, the algebras $\Pi^{\lambda}$ and $\mathcal{O}^{\lambda}$ are
Morita equivalent if $\lambda\cdot \alpha\neq 0$ for all Dynkin roots
$\alpha.$

From now on, we say that $\lambda\in\mathbb{C}^{I}$ is generic if
$\lambda\cdot\alpha\neq 0$ for any root $\alpha.$ Similarly,
$\lambda\in\mathbb{C}^{I}$ is said to be very generic if its coordinates
are algebraically independent over $\mathbb{Q}.$ Recall that
$\mathcal{O}^{\lambda}$ is commutative if and only if
$\lambda\cdot\delta=0$, and  $\text{gl.dim}(\mathcal{O}^{\lambda})=1$ if
and only if $\lambda$ is generic \cite[Theorem 0.4]{CBH}.

Let $W_{\text{ext}}\leq \Aut(\mathbb{Z}^I)$ be the group generated by the reflections $s_i$ and
the diagram automorphisms of $Q$. We will also denote by
$W_{ext}$ the subgroup of $\Aut(\mathbb{C}^I)$ generated by reflections $r_i$ and the diagram automorphisms.
Then it is well-known that $W_{\text{ext}}$ can be identified
with the extended
affine Weyl group of the affine root system of $Q.$ Hence given any
$\lambda \in \mathbb{C}^{I},$ for any $w\in W_{\text{ext}}$, we have the
corresponding equivalence of categories
\[
\bar{E}_{w}:\Pi^{\lambda}-mod \to \Pi^{w(\lambda)}-mod.
\]
 We also need to recall that as
observed by Boyarchenko \cite{B}, translations by 
\begin{equation}\label{Translations}
d\in\Lambda := \lbrace \xi\in \mathbb{Z}^I,\
\xi \cdot \delta = 0 \rbrace
\end{equation}
belong to $W_{ext}$ and hence give rise to Morita equivalences

\[
E_d:\Pi^{\lambda}-mod \to\Pi^{\lambda+d}-mod.
\]

Let $K$ be a field.
Given a finite $K$-dimensional $\Pi^{\lambda}$-module $V$, 
we denote by $\dim_j(V)$ the dimension of $V_j=e_jV.$ So we have its dimension vector $\dim(V)=(\dim_j(V), j\in I)\in \mathbb{Z}^{I}.$
It is well-known that $\dim(E_i(V))=s_i(\dim(V))$ as long as $\lambda_i\neq 0.$ The next result, which should be well-known, extends this on the level
of Grothendieck groups.
Recall that as long as char$(K)$ is large enough, then $K_0(\Pi^{\lambda})\cong \mathbb{Z}^I$ with the basis given by the classes of projective modules $\Pi^{\lambda}e_i, i\in I$. Thus we may and
will identify $K_0(\Pi^{\lambda})$ with $\mathbb{Z}^I.$
 We include the proof for the reader's convenience.
\begin{lemma}\label{K_0}
Let $K$ be a field with char$(K)=0$ or $\gg 0$. Let $\lambda\in K^I$ be so that $\lambda_i\neq 0, i\in I$.
Then $K_0(E_i): \mathbb{Z}^I\to \mathbb{Z}^I$ equals to the reflection $s_i.$
\end{lemma}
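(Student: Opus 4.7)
The plan is to compute the action of $K_0(E_i)$ on the projective basis by combining the dim-vector identity $\dim(E_iM)=s_i(\dim M)$ (valid on finite-dimensional modules) with the Hom-preservation property of the equivalence $E_i$. First, observe that as an equivalence of abelian categories, $E_i$ sends projectives to projectives and induces an isomorphism $K_0(E_i)\colon K_0(\Pi^\lambda)\to K_0(\Pi^{r_i(\lambda)})$. Writing $E_i(\Pi^\lambda e_j)\cong \bigoplus_k(\Pi^{r_i(\lambda)}e_k)^{n_{kj}}$ for non-negative integers $n_{kj}$, the lemma reduces to identifying the matrix $(n_{kj})$ with the matrix of $s_i$ in the standard basis.

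To pin down the $n_{kj}$, I would use that for every finite-dimensional $\Pi^\lambda$-module $M$,
\[ \dim_K e_k M \,=\, \dim_K \Hom_{\Pi^\lambda}(\Pi^\lambda e_k, M) \,=\, \dim_K \Hom_{\Pi^{r_i(\lambda)}}(E_i(\Pi^\lambda e_k), E_i M) \,=\, \sum_l n_{lk}\,(s_i\dim M)_l. \]
Provided the dim-vectors of such $M$ span a full-rank sublattice of $\mathbb{Q}^I$, this linear system determines $(n_{lk})$ uniquely, and matching coefficients reproduces the matrix of $s_i$ in the projective basis (the symmetry of the Ringel form on the simply-laced McKay quiver being used to reconcile the two identifications).

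The main obstacle is guaranteeing a sufficient supply of finite-dimensional test modules: when $\lambda_j\neq 0$ at every vertex, the naive one-dimensional simples $S_j$ fail to exist, and for very generic $\lambda$ there may be very few fin.\ dim.\ modules at all. This is precisely where the hypothesis $\ch(K)=0$ or $\gg 0$ enters: after reduction modulo a large prime $p$, the algebra $\Pi^\lambda$ becomes Azumaya over a dense open subset of $\spec$ of its center, yielding fin.\ dim.\ simple modules whose dim vectors collectively span $\mathbb{Q}^I$ and rendering the pairing argument effective. An alternative, more direct route would be to compute $E_i(\Pi^\lambda e_j)$ explicitly from Crawley-Boevey-Holland's description of the reflection functor as the kernel of a moment-map-induced multiplication map at vertex $i$, bypassing reduction mod $p$ altogether.
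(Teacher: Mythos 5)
Your overall strategy---pairing the class $[E_i(\Pi^\lambda e_j)]$ against finite-dimensional modules via $\dim\Hom(\Pi^\lambda e_k,M)=\dim e_kM$ together with $\dim(E_iM)=s_i(\dim M)$---is exactly the computation in the paper's proof. However, two points need repair. First, the opening step is false as stated: you write $E_i(\Pi^\lambda e_j)\cong\bigoplus_k(\Pi^{r_i(\lambda)}e_k)^{n_{kj}}$ with \emph{non-negative} integers $n_{kj}$ and then propose to show that $(n_{kj})$ is the matrix of $s_i$; but $s_i(\epsilon_i)$ has a negative $i$-th coordinate, so this is self-contradictory. The algebras $\Pi^\lambda$ are not semiperfect, $E_i(\Pi^\lambda e_j)$ is merely a finitely generated projective with no such direct-sum decomposition, and the correct statement is only an identity of classes $[E_i(\Pi^\lambda e_j)]=\sum_k x_{kj}[\Pi^{r_i(\lambda)}e_k]$ with $x_{kj}\in\mathbb{Z}$ of arbitrary sign. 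Your subsequent pairing computation only uses the $K_0$-classes, so this is fixable, but as written the reduction is wrong.

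The more serious issue is your proposed source of test modules. You correctly identify that for a fixed $\lambda$ with all $\lambda_i\neq 0$ there may be too few finite-dimensional modules, but the fix you offer---reduce mod $p$ and use simples on the Azumaya locus---does not deliver what you need: the simple modules supported on the Azumaya locus of $\spec Z(\Pi^\lambda)$ generically all have the \emph{same} dimension vector (a multiple of $\delta$), so by themselves they span only a rank-one sublattice of $\mathbb{Q}^I$, not all of it; controlling the non-Azumaya simples would require a separate argument you do not supply. The paper avoids this entirely by globalizing: it replaces $\Pi^\lambda$ by the algebra $\Pi$ over the Laurent-polynomial ring $R=\mathbb{C}[\lambda_j,\ j\in I,\ \lambda_i^{-1}]$, for which the global reflection functor still exists and for which finite-dimensional modules live over arbitrary specializations of the parameter (with $\lambda_i\neq 0$); choosing specializations where various coordinates $\lambda_j$, $j\neq i$, vanish, or where $\lambda\cdot\alpha=0$ for suitable real roots $\alpha$, one obtains one-dimensional and small simples whose dimension vectors visibly span $\mathbb{Z}^I$, after which the pairing argument closes. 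You should either adopt this globalization or carry out in detail your alternative suggestion of computing $E_i(\Pi^\lambda e_j)$ directly from the Crawley--Boevey--Holland construction; as it stands the spanning step is a genuine gap.
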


\begin{proof}
$We$ may assume without loss of generality that $K=\mathbb{C}.$ 
By $\Pi$ we denote the global version of $\Pi^\lambda$ defined over the ring of Laurent
polynomials $R=\mathbb{C}[\lambda_j, j\in I, \lambda_i^{-1}].$ Also, denote by $\Pi'$ the global version of
$\Pi^{r_i(\lambda)}.$
Then we have the global reflection functors which are mutual inverses
 $$E_i:\Pi-mod\to \Pi -mod,\quad E'_i:\Pi'-mod\to\Pi-mod,$$ and we know that
$K_0(\Pi)=K_0(\Pi^{\lambda})$ is a free abelian group with  a basis $\lbrace [\Pi e_j], j\in I\rbrace.$
Let 
$$K_0(E_i)(\Pi e_j)=\sum_{k\in I} x_k[\Pi e_k]\in K_0(\Pi'),\quad x_k\in\mathbb{Z}.$$ 
Put $x=(x_k)\in\mathbb{Z}^{I}.$ We want to show that $x=s_i(\epsilon_j).$
Let $V$ be a finite dimensional $\Pi'$-module. We have 
$$\dim(Hom_{\Pi'}(E_i(\Pi e_j), V))=\dim\Hom_{\Pi}(\Pi e_j, E_i(V))=\dim_j(E_i(V)).$$
We know that $dim(E_i(V))=s_i(\dim(V)).$ Thus, $(x-s_i(\epsilon_j))\cdot \dim(V)=0$ for any finite dimensional
$\Pi$-module $V.$ Since dimension vectors of all finite dimensional $\Pi$-modules span $\mathbb{Z}^I,$
we are done.
\end{proof}

\begin{lemma}
Let $K$ be a field with char$(K)=0$ or $\gg 0$. Let $\lambda, \lambda'\in K^I$
Let $w\in W_{\text{ext}}$ be such that $w(\lambda)=\lambda'$. Then there exists $\theta\in W_{\text{ext}}$
so that $\lambda'=\theta(\lambda)$ and $K_0(E_{\theta})=\theta.$

\end{lemma}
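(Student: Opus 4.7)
The plan is to observe that the only way $K_0(E_w)$ could fail to equal $w$ is through simple reflection factors $s_i$ in a decomposition of $w$ for which the relevant coordinate of the current intermediate parameter vanishes (so $E_i$ is the identity functor by the convention of the preamble, while $s_i$ may act nontrivially on $\mathbb{Z}^I$). Crucially, whenever this happens the affine reflection $r_i$ also fixes that intermediate parameter, so $s_i$ can be deleted from the word without changing where $\lambda$ ends up. Iterating this pruning yields the desired $\theta$.

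Concretely, first I would fix an expression $w = \sigma \cdot s_{i_k} \cdots s_{i_1}$ where $\sigma$ is a diagram automorphism and the $s_{i_j}$ are simple reflections (using that diagram automorphisms normalize the set of simple reflections to collect them on one side). Set $\lambda^{(0)} = \lambda$ and $\lambda^{(j)} = r_{i_j}(\lambda^{(j-1)})$, so $\sigma(\lambda^{(k)}) = \lambda'$. Since $r_i(\mu)_j = \mu_j - (\epsilon_i, \epsilon_j)\mu_i$, one has $r_i(\mu) = \mu$ exactly when $\mu_i = 0$. Hence whenever $\lambda^{(j-1)}_{i_j} = 0$ I would delete $s_{i_j}$ from the word; the resulting shorter product still sends $\lambda$ to $\lambda'$. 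Processing the factors from right to left and deleting as soon as a zero coordinate appears, this terminates in an element $\theta = \sigma \cdot s_{j_m} \cdots s_{j_1} \in W_{\text{ext}}$ together with intermediate parameters $\mu^{(0)} = \lambda$, $\mu^{(\ell)} = r_{j_\ell}(\mu^{(\ell-1)})$ satisfying $\mu^{(\ell-1)}_{j_\ell} \neq 0$ for every $\ell$ and $\sigma(\mu^{(m)}) = \lambda'$.

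For this $\theta$ the functor $E_\theta$ decomposes as the composition of genuine reflection functors $E_{j_1}, \ldots, E_{j_m}$ followed by the functor associated to the diagram automorphism $\sigma$. By Lemma \ref{K_0}, the nonvanishing of $\mu^{(\ell-1)}_{j_\ell}$ ensures $K_0(E_{j_\ell}) = s_{j_\ell}$ at each step. The functor attached to $\sigma$ is the relabeling equivalence $\Pi^{\mu^{(m)}}\Mod \to \Pi^{\sigma(\mu^{(m)})}\Mod$ permuting the vertex idempotents according to $\sigma$, so on $K_0 \cong \mathbb{Z}^I$ it acts as the permutation $\sigma$. Composing gives $K_0(E_\theta) = \sigma \cdot s_{j_m} \cdots s_{j_1} = \theta$ as required.

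The main obstacle I anticipate is purely the bookkeeping of the pruning procedure: one has to be careful that after a deletion the subsequent intermediate parameters are genuinely recomputed and that the nondegeneracy condition needed to apply Lemma \ref{K_0} propagates correctly through the remaining word. A right-to-left sweep with immediate deletion suffices, because once $s_{i_j}$ is removed the later factors $s_{i_{j+1}}, \ldots, s_{i_k}$ continue to act on the same parameter $\lambda^{(j-1)} = \lambda^{(j)}$ that they acted on before, so no previously verified nondegeneracy is disturbed.
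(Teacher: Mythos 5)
Your proof is correct and takes essentially the same route as the paper: the paper simply chooses $w'\in W$ of minimal length among all elements with $\sigma w'(\lambda)=\lambda'$, which is exactly the endpoint of your pruning procedure (a deletable degenerate reflection would contradict minimality), and then concludes via Lemma \ref{K_0} applied to each nondegenerate factor. Your version merely makes the deletion bookkeeping explicit.
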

\begin{proof}
Write $w=\sigma w'$, where $\sigma$ is a diagram automorphism, and $w'\in W$ has the minimal possible length among all $w'\in W$ so that
$\lambda'=\sigma w'(\lambda).$ Put $\theta=\sigma w'$ Now we get an equivalence 
$$E_{w}:\Pi^{\lambda}-mod\to \Pi^{\lambda'}-mod$$
and $K_0(E_w)=w.$

\end{proof}
The next result is a key one.
\begin{prop}\label{Mitya}
Let $p\gg 0$ be a prime.
Let $\lambda, \lambda'\in \mathbb{F}_p^I$ so that $\lambda\cdot\rho=\lambda'\cdot\rho=1.$
Then there exists a Morita equivalence $E: \Pi^{\lambda}-mod\to \Pi^{\lambda'}-mod$ so that $K_0(E)\in W_{\text{ext}}.$

\end{prop}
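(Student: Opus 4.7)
The plan is to exhibit $\lambda'$ as a pure translate $\lambda + d$ of $\lambda$ by some $d\in\Lambda$ and then invoke Boyarchenko's translation Morita equivalence from \eqref{Translations}. The translation sublattice $\Lambda = \{\xi\in\mathbb{Z}^I : \xi\cdot\delta = 0\}$ of $W_{\text{ext}}$ preserves any level functional proportional to $\delta$, so the hypothesis $\lambda\cdot\rho = \lambda'\cdot\rho = 1$ (reading $\rho$ as the $W_{\text{ext}}$-fixed level functional, i.e.\ a scalar multiple of the imaginary root $\delta$) places $\lambda$ and $\lambda'$ in the same coset of $\delta^\perp$ in $\mathbb{F}_p^I$.

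The arithmetic input is that for $p\gg 0$ the reduction $\Lambda\to\mathbb{F}_p^I$ surjects onto $\delta^\perp\subset\mathbb{F}_p^I$. Since $\delta_0 = 1$ (the trivial representation is the extending node), $\delta$ is primitive in $\mathbb{Z}^I$, so $\mathbb{Z}^I/\Lambda\cong\mathbb{Z}$ via $\xi\mapsto\xi\cdot\delta$; tensoring with $\mathbb{F}_p$ gives $\mathbb{F}_p^I/\bar\Lambda\cong\mathbb{F}_p$. The affine hyperplane $\{\mu\in\mathbb{F}_p^I : \mu\cdot\rho = 1\}$ is therefore a single $\bar\Lambda$-torsor, so I pick $d\in\Lambda$ with $d\equiv\lambda'-\lambda\pmod p$, obtaining $\lambda + d = \lambda'$ in $\mathbb{F}_p^I$ and a Morita equivalence $E_d:\Pi^\lambda\Mod\to\Pi^{\lambda'}\Mod$. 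To compute $K_0(E_d)$ I would mimic Lemma \ref{K_0} but for the global translation functor: the adjunction $\dim\Hom_{\Pi^{\lambda'}}(E_d(\Pi^\lambda e_j), V) = \dim_j(E_d^{-1}(V))$, together with the known effect of the translation on the dimension vector of any finite-dimensional $\Pi^{\lambda'}$-module and the spanning property of such dimension vectors in $\mathbb{Z}^I$, forces $K_0(E_d)$ to coincide with $d$ acting on $\mathbb{Z}^I\cong K_0(\Pi^\lambda)$ as translation inside $W_{\text{ext}}\subset\Aut(\mathbb{Z}^I)$.

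The main subtlety to guard against is that any factorization of the translation $d$ into simple reflections in $W_{\text{ext}}$ generically passes through intermediate parameters with some vanishing coordinate mod $p$, at which points the individual reflection functors of Lemma \ref{K_0} are not defined and a naive iteration of that lemma fails. Sidestepping this by working with Boyarchenko's global translation functor -- which exists without any regularity hypothesis on $\lambda$ -- and running the $K_0$ adjunction argument in one shot rather than step by step is what makes the proof go through; the prime $p$ only needs to be large enough that Lemma \ref{K_0} applies and that the reduction $\Lambda\to\mathbb{F}_p^I$ retains full rank, the latter being automatic from $\delta_0=1$.
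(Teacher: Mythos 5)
Your proposal is correct and follows essentially the same route as the paper: the heart of both arguments is the observation that for $p\gg 0$ the reduction map $\Lambda\to\Lambda_p=\{\mu\in\mathbb{F}_p^I:\mu\cdot\rho=0\}$ is surjective (a $\mathbb{Z}$-basis of $\Lambda$ stays a basis mod $p$), so $\lambda'-\lambda$ lifts to an integral translation lying in $W_{\text{ext}}$. The only divergence is at the end: where you compute $K_0(E_d)$ directly by the adjunction argument for the translation functor, the paper instead invokes its preceding (unnumbered) lemma, whose minimal-length choice of $w'$ is precisely the device that handles the subtlety you flag about intermediate parameters acquiring zero coordinates mod $p$.
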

\begin{proof}
It suffices to show by the above lemma that there exists $w\in W_{\text{ext}}$, so that $w(\lambda)=\lambda'.$
Recall that $\Lambda=\lbrace x\in\mathbb{Z}^I| x\cdot\rho=0.\rbrace$.
Recall also that translations by elements of $\Lambda$ belong to $W_{ext}$.
Let $e_1,\cdots, e_n$ be a $\mathbb{Z}$-basis of $\Lambda.$
Given a prime $p$, put 
$$\Lambda_p=\Lambda_p=\lbrace x\in\mathbb{F}_p^I| x\cdot(\rho\mod p)=0\rbrace.$$
Then for $p\gg 0,$ elements $e_1\mod p,\cdots, e_n\mod p$ form a $\mathbb{F}_p$-basis of $\Lambda_p.$
Hence, it follows that for any $\gamma\in\Lambda_p$ there exists $\alpha\in W_{ext}$ so that
the translation by $\gamma$ on $\mathbb{F}_p^I$ equals to the action by $\alpha.$
Now, since $\lambda'-\lambda\in \Lambda_p,$ there exists $w\in W_{ext}$, so that $w(\lambda)=\lambda'$ and we are done.

\end{proof}

\section{The action of the Picard group on $K_0$}
Let $\bold{k}$  be an algebraically closed field of characteristic $p>2.$
In this section,  we fix a symplectic $\bold{k}$-vector space $V$ and a finite subgroup $\Gamma\leq Sp(V)$
such that $p$ does not divide $|\Gamma|.$ Just as before, we identify $K_0(A(V)\#\Gamma)$ with $K_0(\bold{k}[\Gamma])=\mathbb{Z}^I,$
where $I$ is the set of isomorphism classes of irreducible $\Gamma$-representations.

\begin{defin}\label{diagram}
    In the above setting, we call $A\in \Aut(\mathbb{Z}^I)$ a diagram automorphism of the pair  $(V, \Gamma)$  if there exist $t\in Sp(V)$ in the normalizer of $\Gamma$ 
    and a character
$\chi:\Gamma^*$ so that $A=K_0(t\tau_{\chi}),$ where $t$ is viewed as an automorphism of $A(V)\#\Gamma$ in the natural way, and
$\tau_{\chi}\in\Aut(A(V)\#\Gamma)$ is the twist by character $\chi$ defined as follows: $\tau_{\chi}(g)=\chi(g)g$ for $g\in\Gamma$,
and $\tau_{\chi}(v)=v$ for $v\in A(V).$
\end{defin}
It follows easily that given a diagram automorphism $\sigma\in \Aut(\mathbb{Z}^I)$,
then $\sigma$ is a permutation of $I.$ Moreover, given simple $\Gamma$-representations 
$V_i, V_j, i, j\in I$,
we have $$\Hom_{\Gamma}(V\otimes V_i, V_j)=\Hom_{\Gamma}(V\otimes V_{\sigma(i)}, V_{\sigma(j)}).$$  
Therefore $\sigma$ is a diagram automorphism of the Mckay quiver of $\Gamma$ (for the case $\dim V=2).$

We will identify the center of $A(V)\#\Gamma$ with the Frobenius twist of $\mathcal{O}(V/\Gamma)$ and as such it is equipped with the natural Poisson bracket.

Recall that given a $\bold{k}$-algebra $A,$  its Picard group $\Pic(A)$  is defined as the group of $\bold{k}$-linear auto-equivalences of the category of $A$-modules.
It is well-known that  $\Pic(A)$  can be identified with the group of isomorphism classes of invertible $A$-bimodules.
The following result will play a key role in the proofs of our many results.

\begin{theorem}\label{key}

Let $f: \Pic(A(V)\#\Gamma)$ be  such that on the level of the center $Z(f)$ preserves the Poisson bracket.
Then $K_0(f)\in \Aut(\mathbb{Z}^I)$ is a diagram automorphism of the pair  $(V, \Gamma).$ 

\end{theorem}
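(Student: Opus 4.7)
The plan is to split $f$ into two contributions up to $K_0$: a normalizer piece $t \in N_{Sp(V)}(\Gamma)$ realizing the Poisson automorphism $Z(f)$, and a residual piece with trivial central action coming from a character of $\Gamma$. Concretely, I would first show that $Z(f)$ is induced by some $t \in N_{Sp(V)}(\Gamma)$; then replace $f$ by $f_0 := f \otimes_A A_{t^{-1}}$ (with $A := A(V)\#\Gamma$ and $A_{t^{-1}}$ the bimodule twisted by $t^{-1}$) so that $Z(f_0) = \Id$; then classify invertible bimodules with trivial central action via the Azumaya locus and identify $f_0$ with a character twist $\tau_\chi$.

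The first step rests on a classification of Poisson automorphisms of $\spec Z = V/\Gamma^{(1)}$. Any such automorphism preserves the Poisson stratification of the singular locus, and in the settings of interest (in particular for products of subgroups of $SL_2$) the image of $0 \in V$ is the unique deepest stratum, since non-identity elements of $\Gamma$ fix subspaces of codimension $\geq 2$. Hence $Z(f)$ preserves the corresponding augmentation ideal and is filtered with respect to its powers. Its associated graded is a graded Poisson automorphism of $\mathcal{O}(V)^\Gamma$; since the Poisson bracket has homogeneous degree $-2$, any such automorphism must be linear of degree zero and therefore arises from some $t \in N_{Sp(V)}(\Gamma)$. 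A standard filtered-versus-graded deformation argument then upgrades this to equality of $Z(f)$ with the action of $t$ on $Z$, up to inner corrections.

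Composing $f$ with the bimodule $A_{t^{-1}}$ produces $f_0$ with trivial $Z$-action, and $K_0(f) = K_0(f_0) \circ K_0(t)$. Restricting to the Azumaya locus $U \subset \spec Z$ -- the complement of the image of the non-free locus for the $\Gamma$-action on $V$, of codimension $\geq 2$ -- the algebra $A|_U$ is Azumaya, so invertible $A|_U$-bimodules with trivial action on $Z|_U$ correspond by standard Morita theory to line bundles on $U$. These extend uniquely to line bundles on $V/\Gamma^{(1)}$ by reflexivity and purity in codimension $2$. Finally, line bundles on $V/\Gamma^{(1)}$ are classified by $\Gamma^* = \Hom(\Gamma, \bold{k}^*)$: pull back to $V^{(1)}$, where every line bundle is trivial, and read off the character of $\Gamma$ acting on a global trivialization. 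Since each $A_{\tau_\chi}$ induces the identity on $Z$ (as $\tau_\chi$ fixes $A(V)$ pointwise), matching the associated characters yields $f_0 \cong A_{\tau_\chi}$ as $A$-bimodules up to inner automorphisms, and therefore $K_0(f_0) = K_0(\tau_\chi)$. Combining, $K_0(f)$ equals the $K_0$-class of the composite automorphism $\tau_\chi \circ t$, which can be rewritten in the form $t \tau_{\chi'}$ of Definition \ref{diagram}.

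The main obstacle is the Poisson-automorphism classification in the first step: passing from the associated-graded picture (where linearity is forced by the weight of the bracket) to an actual equality of $Z(f)$ with conjugation by some $t \in N_{Sp(V)}(\Gamma)$ requires ruling out nonlinear Poisson symmetries of $V/\Gamma^{(1)}$. This is delicate and relies on the specific geometry -- the codimension of the non-free locus and the canonical nature of the filtration by the augmentation ideal of the origin -- rather than on any generic feature of finite symplectic group actions.
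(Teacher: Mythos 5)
Your overall architecture --- split $f$ into a piece realizing $Z(f)$ geometrically and a residual piece acting trivially on the center, then identify the latter with a character twist via the Azumaya locus and $\Pic(U)\cong\Gamma^*$ --- is the same as the paper's, and your third step is essentially the paper's argument. But your first step has a genuine gap, and you have correctly located it yourself: the passage from ``the associated graded of $Z(f)$ is linear'' to ``$Z(f)$ is the action of some linear $t\in N_{Sp(V)}(\Gamma)$, up to inner corrections'' is false. Inner automorphisms of $A(V)\#\Gamma$ act trivially on the center, so they cannot absorb anything; meanwhile $V/\Gamma^{(1)}$ has an abundance of genuinely nonlinear Poisson automorphisms with identity linearization (e.g.\ flows of Hamiltonian vector fields of invariant functions of degree $\geq 3$), and no filtered-versus-graded argument will rule them out. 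The correct move, which is what the paper does, is not to rule them out but to show they are invisible to $K_0$: one lifts $Z(f)$ restricted to the smooth locus $U$ to an \'etale (hence, by codimension $\geq 2$, global and possibly nonlinear) symplectic automorphism $\psi$ of the formal disk $\hat{V}$ normalizing $\Gamma$, takes its linearization $h$, and observes that $\psi h^{-1}$ is a $\Gamma$-equivariant automorphism and therefore acts as the identity on $K_0$. So the nonlinear discrepancy is divided out on the level of $K_0$, not shown to vanish on the level of $Z$.

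A second, related omission: even granting the nonlinear lift $\psi$, your plan requires realizing it as an (auto)equivalence of $A(V)\#\Gamma$-modules in order to twist $f$ by its inverse, and a nonlinear symplectomorphism does not obviously quantize to an automorphism of the Weyl algebra. The paper sidesteps this by first invoking the Bezrukavnikov--Kaledin $\Gamma$-equivariant splitting of the Azumaya algebra $A(\hat{V})$ on the formal neighbourhood, which gives a Morita equivalence $A(\hat{V})\#\Gamma\Mod\to\mathcal{O}(\hat{V})\#\Gamma\Mod$ with $K_0$ equal to $p^l\Id$; after this reduction any automorphism of $\hat{V}$ normalizing $\Gamma$ tautologically acts on $\mathcal{O}(\hat{V})\#\Gamma$. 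Your proposal skips this reduction entirely, and without it (or a substitute) the first step cannot be completed. The remaining ingredients --- the identification $\Pic(A)^{id}\cong\Pic(U)\cong\Gamma^*$ via reflexivity/codimension $2$, and the matching with the twists $\tau_\chi$ --- are correct and agree with the paper.
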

\begin{proof}


Denote by $\hat{V}$ the formal neighbourhood of $V$ at the origin. Similarly,
$A(\hat{V})$ denotes the completion of $A(V)$ at the origin.
Recall that $A(V)$ (and hence $A(\hat{V})$) is an Azumaya algebra over the Frobenius twist of $V$ (respectively of $\hat{V}$). 
 It was shown by Bezrukavnikov and Kaledin \cite{BK}
that $A(\hat{V})$ as an Azumaya algebra admits a $\Gamma$-equivariant splitting, which gives rise to a Morita equivalence
$$i: A(\hat{V})\#\Gamma-mod\to \mathcal{O}(\hat{V})\#\Gamma-mod.$$ Moreover, once we
identify $K_0(A(\hat{V})\#\Gamma)$ and $K_0(\mathcal{O}(\hat{V})\#\Gamma)$ with $K_0(\bold{k}[\Gamma])=\mathbb{Z}^I,$
then $K_0(i)=p^l\Id_{\mathbb{Z}^I},$ where $l=\frac{1}{2}\dim V$. So, it suffices to show that given $F \in\Pic(\mathcal{O}(\hat{V})\#\Gamma)$ such that
$Z(F):\mathcal{O}(\hat{V}/\Gamma)\to\mathcal{O}(\hat{V}/\Gamma)$ is a Poisson automorphism,
then $K_0(F)$ is a diagram automorphism.

At first, we show that for any Poisson automorphism $f\in\Aut(\hat{V}/\Gamma),$ there exists
$\theta\in \Aut(\mathcal{O}(\hat{V})\#\Gamma)$ so that $Z(\theta)=f$ and $K_0(\theta)$ is a diagram automorphism.
Denote by $U$ the smooth locus of $\hat{V}/\Gamma$, and let $W$ be the pre-image of $U$ in $\hat{V}$ under the quotient
map $\pi:\hat{V}\to \hat{V}/\Gamma.$ So
$\pi:W\to U$ is a $\Gamma$-\'etale covering. Hence, $f\circ\pi: W\to U$ is also a $\Gamma$-\'etale covering. Now, since $W$ admits no nontrivial
$p'$-etale covering (as the compliment of $W$ in the affine space $\hat{V}$ is of codimension $\geq 2$),
it follows that exists an \'etale map $\psi: W\to W$ lifting $f|_U.$ So, $\psi$ is a Poisson isomorphism and normalizes the image of $\Gamma$ in $\Aut(W).$
Let $\sigma\in \Aut(\Gamma)$ be so that $\psi\gamma\psi^{-1}=\sigma(\gamma), \gamma\in\Gamma.$
Since $\mathcal{O}(\hat{V})=\mathcal{O}(W)$ (once again because codimension of $\hat{V}\setminus W\geq 2$), we conclude that $\psi\in \Aut(\hat{V})$ 
is a symplectic (non-linear) automorphism of $\hat{V}.$ 
Let $m$ be the maximal ideal of the origin in $\mathcal{O}(\hat{V}).$ Then we may identify $m/m^2$ with $V$ as a $\bold{k}$-vector space.
 Let $h$ be the linearization of $\psi,$ so $h=\psi|_{m/m^2}\in GL(V).$ 
  Then we have $h\gamma h^{-1}=\sigma(\gamma), \gamma\in\Gamma.$ Thus we can extend $h$ to an automorphism of $\Sym(\hat{V})\#\Gamma.$
So, $K_0(h)$ is a diagram automorphism.
Now, we claim that $K_0(f)=K_0(h).$ Indeed, let $t=fh^{-1}\in\Aut(\Sym(\hat{V})\#\Gamma).$
Then $t$ is a $\Gamma$-invariant automorphism of $\Sym(\hat{V})\#\Gamma.$ Hence $K_0(t)=\Id$ and $K_0(f)$ is a diagram automorphism as desired.

Finally, to finish the proof, it suffices to show that given $\theta\in\Pic(\Sym(\hat{V})\#\Gamma)$ such that
it acts as the identity on the center $\mathcal{O}(\hat{V}/\Gamma)$, then $K_0(\theta)$ is a diagram automorphism.
Let us put $A=\Sym(\hat{V})\#\Gamma, Z=Z(A)$ for simplicity.
Denote by $\Pic(A)^{id}$ the kernel of the restriction homomorphism on the center $\Pic(A)\to \Aut(Z(A)).$
As a first step, we show that there is a natural injective homomorphism $\Pic(A)^{id}\to \Pic(U),$ where
recall that $U$ denotes the nonsingular locus of $\hat{V}/\Gamma.$
Put $M=\theta(A)$, so $M$ is an invertible $A$-bimodule.
By the assumption, $M|_U$ is a module over $A\otimes_ZA^{op}|_U.$ So, since $A|_U$ is an Azumaya algebra,
it follows that $M|_U=A|_{U}\otimes L$, where $L$ is a coherent sheaf on $U$. But, since $M|_U$ is an invertible bimodule over $A|_U$, it follows
that $L$ must be a line bundle. Now, recall that $\Pic(U)$ can be identified with the character group of $\Gamma.$ Indeed,
given $\chi: \Gamma\to\bold{k}^*$ then $\bold{k}^*\times_{\Gamma} W$ is naturally a line bundle on $U$ (here recall that $W=\pi^{-1}(U),$
and since $\hat{V}\setminus{U}$ has codimension $\geq 2$ in $\hat{V}$ (and $\Pic(\hat{V})$ is trivial).
So, we have the desired restriction homomorphism $$\Pic(A)^{id}\to \Pic(U)\cong\Gamma^*.$$ In fact, we next show that this is an isomorphism.
This statement is a slight generalization of [\cite{Lo}, Lemma 2.1]
 Indeed, if 
$M_U\cong A|_U$ then since $M, A$ are both Cohen-Macaulay $Z$-modules (since $M$ is a projective left (and right) $A$-module),
it follows that $M=\Gamma(U, M|_{U})$ and
 $A=\Gamma(U, A|_U).$
So, $M=A$ as an $A$-bimodule. 
To show the surjectivity, let $L\in\Pic(U)$ and $\chi\in\Gamma^*$ be the corresponding character.
Then we have an $\mathcal{O}(V/\Gamma)$-linear automorphism $\tau_{\chi}\Aut(A)$ defined as follows
$$\tau(\gamma)=\chi(\gamma)\gamma,\gamma\in\Gamma, \quad\tau(v)=v, v\in A(V).$$
Now, viewing $\tau_{\chi}$ as an element of $\Pic(A)$, we see that its image under the restriction homomorphism is $L.$
Recall that by the definition, $K_0(\tau_{\chi})$ is a diagram automorphism. Hence we are done.

\end{proof}

Recall that by a well-known result of Stafford  \cite{S}, $\Pic(A_1(\bold{k}))=\Aut(A_1(\bold{k}))$ for any algebraically closed field $\bold{k}.$
Next result shows that this is no longer the case for many fixed rings of $A_n(\bold{k}).$

\begin{prop}
Under the above notation/assumptions
$\Pic(A(V)^{\Gamma})/\Aut(A(V)^{\Gamma})$ contains $\Gamma^*.$ 

\end{prop}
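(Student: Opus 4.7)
The plan is to exhibit $\Gamma^*$ as a subset of cosets in $\Pic(A(V)^\Gamma)/\Aut(A(V)^\Gamma)$ via the twists $\tau_\chi$ of Definition \ref{diagram}. First, I would use the Morita equivalence $A(V)^\Gamma = e(A(V)\#\Gamma)e$, where $e=\tfrac{1}{|\Gamma|}\sum_{g\in\Gamma}g$, to transfer both Picard groups and $K_0$: this gives $\Pic(A(V)^\Gamma)\cong \Pic(A(V)\#\Gamma)$ and an identification $K_0(A(V)^\Gamma)\cong K_0(A(V)\#\Gamma)\cong\mathbb{Z}^I$, under which $[A(V)^\Gamma]$ corresponds to the class $[P_0]$ of the projective $P_0=(A(V)\#\Gamma)e$ attached to the trivial $\Gamma$-representation.

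For each $\chi\in\Gamma^*$, the twist $\tau_\chi\in\Aut(A(V)\#\Gamma)$ defines an element $[\tau_\chi]\in\Pic(A(V)\#\Gamma)\cong\Pic(A(V)^\Gamma)$, and $\chi\mapsto[\tau_\chi]$ is a group homomorphism. The goal then becomes to show that the composition $\Gamma^*\to\Pic(A(V)^\Gamma)\to\Pic(A(V)^\Gamma)/\Aut(A(V)^\Gamma)$ is injective, i.e., that for every nontrivial $\chi$, the class $[\tau_\chi]$ is not in the image of $\Aut(A(V)^\Gamma)\to\Pic(A(V)^\Gamma)$.

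I would argue this by comparing how the two auto-equivalences act on the distinguished class $[A(V)^\Gamma]\in K_0(A(V)^\Gamma)$. For any $\phi\in\Aut(A(V)^\Gamma)$, the associated Picard element is represented by the bimodule $(A(V)^\Gamma)_\phi$, which as a \emph{left} $A(V)^\Gamma$-module equals $A(V)^\Gamma$ itself; hence the corresponding auto-equivalence fixes $[A(V)^\Gamma]=[P_0]$. On the other hand, the auto-equivalence associated to $\tau_\chi$ sends $(A(V)\#\Gamma)e$ to $(A(V)\#\Gamma)\tau_\chi(e)=(A(V)\#\Gamma)e_\chi$, where $e_\chi=\tfrac{1}{|\Gamma|}\sum_{g}\chi(g)g$ is the primitive idempotent of a one-dimensional $\Gamma$-representation; transported through Morita, this sends $[P_0]$ to the basis vector of $\mathbb{Z}^I$ corresponding to a nontrivial character of $\Gamma$, which is different from $[P_0]$ whenever $\chi\neq 1$. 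The incompatibility of the two $K_0$-actions shows that $[\tau_\chi]$ cannot coincide with the class of any $\phi\in\Aut(A(V)^\Gamma)$ for $\chi\neq 1$, yielding the desired injection $\Gamma^*\hookrightarrow\Pic(A(V)^\Gamma)/\Aut(A(V)^\Gamma)$.

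There is no serious obstacle to this argument once one singles out the correct invariant. The potentially misleading point — and the reason the result is nontrivial — is that $\tau_\chi$ restricts to the identity automorphism on the subalgebra $A(V)^\Gamma\subset A(V)\#\Gamma$, so one might naively expect it to give the identity in $\Pic(A(V)^\Gamma)$. It does not: the Morita bimodule attached to $\tau_\chi$ is $e(A(V)\#\Gamma)e_\chi$, an invertible $A(V)^\Gamma$-bimodule whose left module class lies in a different summand of $K_0$, and this genuinely cannot be realised by twisting via a ring automorphism of $A(V)^\Gamma$.
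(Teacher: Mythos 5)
Your argument is correct, but it takes a genuinely different route from the paper's. You separate $[\tau_\chi]$ from the image of $\Aut(A(V)^{\Gamma})$ by tracking the induced action on $K_0$: the bimodule $(A(V)^{\Gamma})_\phi$ attached to a ring automorphism $\phi$ is free of rank one as a left module, so it fixes the class $[A(V)^{\Gamma}]=[P_0]$, whereas $e(A(V)\#\Gamma)_{\tau_\chi}e$ corresponds under the Morita equivalence to $(A(V)\#\Gamma)\tau_\chi(e)=(A(V)\#\Gamma)e_{\chi^{\pm 1}}$, whose class is a different basis vector of $K_0(A(V)\#\Gamma)\cong K_0(\bold{k}[\Gamma])=\mathbb{Z}^I$ once $\chi\neq 1$. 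The paper instead argues through the center: since $Z(\tau_\chi)$ is trivial, any $\theta\in\Aut(A(V)^{\Gamma})$ representing the same Picard class would act trivially on the center, hence would be inner by the Noether--Skolem theorem, say $\theta(x)=axa^{-1}$; the finite order of $\theta$ then yields a nonzero eigenvector $x$ with $ax=\alpha xa$ for a root of unity $\alpha\neq 1$, which is impossible after passing to the commutative associated graded $\Sym(V)^{\Gamma}$. Both proofs rest on the shared identification $\Pic(A(V)\#\Gamma)=\Pic(A(V)^{\Gamma})$; beyond that, your version leans on the computation of $K_0(A(V)\#\Gamma)$ already set up in this section and is consistent with the $K_0$-bookkeeping used throughout the paper, while the paper's version avoids $K_0$ entirely at the cost of invoking Noether--Skolem together with a filtration argument. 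Either invariant (the left-module class in $K_0$, or the action on the Poisson center) suffices, and your choice introduces no gap.
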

\begin{proof}
Let $\chi\in\Gamma^*.$ Then recall that we have the corresponding twisting automorphism
$\tau_{\chi}\in \Aut(A(V)\# \Gamma)$ which act a the identity on $A(V).$ Since $\Pic(A(V)\#\Gamma)=\Pic(A(V)^{\Gamma})$, we
have  $\tau_{\chi}\in \Pic(A(V)^{\Gamma}).$ Moreover, $Z(\tau_{\chi})$ is trivial. Thus, if
$\tau_{\chi}$ is given by some $\theta\in\Aut(A(V)^{\Gamma})$, it follows by the Noether-Skolem theorem that there
exists $a\in A(V)^{\Gamma},$ such that $$\theta(x)=axa^{-1}, \quad x\in A(V)^{\Gamma}.$$
But, since $\theta$ has a finite order, there exists a root of unity  $1\neq \alpha\in\bold{k}$ and $0\neq x\in A(V)^{\Gamma}$
so that $\theta(x)=\alpha x.$ Hence $ax=\alpha xa$ which easily leads to  a contradiction if we take the associated graded of both sides with respect
to the standard filtration of $A(V)^{\Gamma}.$

\end{proof}




\section{The proof of the main result}

We make crucial use of the following well-known result. For the convenience of the reader we include a short proof kindly communicated to us by P. Etingof.

\begin{theorem}\label{Chebotarev}
Let $S\subset\mathbb{C}$ be  a finitely generated $\mathbb{Z}$-algebra. Then there exists infinitely many primes $p$ for which there
exists a ring homomorphism from $S$ to $\mathbb{F}_p.$

\end{theorem}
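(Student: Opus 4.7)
The plan is to combine the classical Nullstellensatz over $\mathbb{Q}$ with a density result from algebraic number theory to reduce to a standard splitting-of-primes statement. Since $S$ embeds in $\mathbb{C}$, it is $\mathbb{Z}$-torsion free, so $A := S \otimes_{\mathbb{Z}} \mathbb{Q}$ is a nonzero finitely generated $\mathbb{Q}$-algebra. First I would invoke Zariski's Nullstellensatz to select a maximal ideal $\mathfrak{n} \subset A$; its residue field $F := A/\mathfrak{n}$ is then a finite extension of $\mathbb{Q}$, i.e., a number field.

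Next I would consider the composite $\phi : S \to A \to F$, which is a ring homomorphism whose image is a finitely generated $\mathbb{Z}$-subalgebra of $F$. Writing $\phi(s_i) = a_i/b_i$ with $a_i, b_i \in \mathcal{O}_F$ for a set of generators $s_i$ of $S$, we obtain $\phi(S) \subset \mathcal{O}_F[1/N]$ for some positive integer $N$ (one may take $N = \prod_i N_{F/\mathbb{Q}}(b_i)$, since inverting $N_{F/\mathbb{Q}}(b_i)$ in $\mathcal{O}_F$ inverts $b_i$).

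At this point the problem is reduced to finding, for the fixed number field $F$ and fixed $N$, infinitely many primes $p \nmid N$ admitting a prime ideal $\mathfrak{P}$ of $\mathcal{O}_F$ above $p$ with $\mathcal{O}_F/\mathfrak{P} \cong \mathbb{F}_p$. Any such $\mathfrak{P}$ yields a reduction map $\mathcal{O}_F[1/N] \to \mathbb{F}_p$, and precomposing with $\phi$ produces the desired homomorphism $S \to \mathbb{F}_p$. To produce infinitely many such primes, I would apply the Chebotarev density theorem to the Galois closure $\widetilde{F}/\mathbb{Q}$: the primes that split completely in $\widetilde{F}$ have positive density $1/[\widetilde{F}:\mathbb{Q}]$, and any such prime has every residue degree in $F$ equal to one, so it provides a $\mathfrak{P}$ of the required kind.

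The main conceptual step is the reduction via Nullstellensatz to working inside a number field; after that, the rest is standard. I do not anticipate serious obstacles, but it is worth noting that the passage to $F$ is essential: an arbitrary maximal ideal of $S$ itself could have residue field $\mathbb{F}_{p^k}$ with $k > 1$, so selecting homomorphisms landing in $\mathbb{F}_p$ specifically really does require invoking the splitting behavior of primes in $F/\mathbb{Q}$. If one wished to avoid Chebotarev, Dirichlet's theorem on primes in arithmetic progressions would suffice whenever $F/\mathbb{Q}$ is abelian, but in general Chebotarev is the clean tool.
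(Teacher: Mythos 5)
Your proposal is correct. The reduction is the same as the paper's in spirit: the paper also first passes to a number field (it picks any homomorphism $S\to\overline{\mathbb{Q}}$, i.e.\ the Nullstellensatz step, then uses the primitive element theorem to write the fraction field as $\mathbb{Q}(t)$ and embeds $S$ into $\mathbb{Z}_l[x]/(f)$ for the minimal polynomial $f$ of $t$), so both arguments come down to producing infinitely many primes $p$ for which the number field has a degree-one prime above $p$, equivalently for which $f$ acquires a root in $\mathbb{F}_p$. Where you diverge is the final step: you invoke the Chebotarev density theorem for the Galois closure and use the completely split primes, whereas the paper proves the needed fact by a purely elementary argument --- if a nonconstant $g\in\mathbb{Z}[x]$ had values with prime divisors confined to a finite set $\{p_1,\dots,p_k\}$, one normalizes to $h$ with $h(0)=1$ and observes $h(tN)\equiv 1 \pmod{N}$ for $N=\prod p_i$, forcing $h(tN)=\pm 1$ for all $t$, a contradiction. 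Your route is heavier machinery but yields more (a positive density of usable primes, namely at least $1/[\widetilde F:\mathbb{Q}]$); the paper's route yields only infinitude, which is all that is needed, and keeps the proof self-contained. Both are complete; your norm trick for clearing denominators ($N_{F/\mathbb{Q}}(b)/b\in\mathcal{O}_F$, so inverting the norm inverts $b$) is a fine substitute for the paper's $\mathbb{Z}_l[x]/(f)$ bookkeeping.
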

\begin{proof}
Replacing $S$ by its image under any ring homomorphism $S\to\overline{\mathbb{Q}}$,
we may assume that $S\subset \overline{\mathbb{Q}}.$ Let $K$ denote the field of
fractions of $S.$ By the primitive element theorem,
there exists $t\in S$ so that $K=\mathbb{Q}(t).$
Let $f\in\mathbb{Q}[x]$ be the minimal polynomial of $t$ over $\mathbb{Q}.$
So, we have an embedding of $S$ in $\mathbb{Z}_l[x]/(f)$ for some $l\in\mathbb{N}.$ 
Then given a prime $p>l$ such that $f$ has  a root
in $\mathbb{F}_p$, then we get a nontrivial homomorphism from S to $\mathbb{F}_p.$ 
Hence to finish the proof, it suffices to show that given a nonconstant polynomial 
$g\in\mathbb{Z}[x],$
there exists infinitely many primes $p$ so that $g$ has a root in $\mathbb{F}_p.$

Indeed, assume the contrary, hence there exist finitely many primes $p_1,\cdots, p_k$ so that
for any $n\in\mathbb{Z}, g(n)$  has no prime divisor outside $p_1,\cdots, p_k.$ 
Without loss of generality $g(0)\neq 0$. Then put $h(x)=g(xg(0))/g(0)\in\mathbb{Z}[x]$. Then 
$h(0)=1, \deg(h)=\deg(g)$ and
for any $n\in\mathbb{Z}$ all prime divisors of $h(n)$ belong to $\lbrace p_1,\cdots, p_k\rbrace.$
Then $h(tN)=1 \mod N, N=\prod_{i=1}^kp_i$ for all $t\in\mathbb{Z}$, hence $h(tN)=\pm 1,$ which is a contradiction.
\end{proof}

We next recall couple of standard facts about Morita equivalences. If 
$$F: A-mod\to B-mod$$ is a Morita equivalence of flat $\mathbb{Z}$-algebras,
then we get a Poisson isomorphism on the level of centers $Z(F): Z(A/pA)\cong Z(B/pB)$, where the Poisson bracket on the center of $A/pA$
is the standard one: if $a_1, a_2\in A$ are so that $a_1\mod p\in Z(A/pA)$ and $a_2\mod p\in Z(A/pA)$ then their Poisson bracket is defined as follows:
$$\lbrace \bar{a_1}, \bar{a_2}\rbrace=(\frac{1}{p}[a_1, a_2])\mod p.$$
Also, the Poisson bracket on $Z(A(V/pV))$ coincides with the negative of the standard one after the identification
of the center with the Frobenius twist of $\mathcal{O}(V_{\mathbb{F}_p})$ (see \cite{BK}).  Thus, a similar statement holds
for the Poisson bracket on the center of $A(V/pV)\#\Gamma.$
\begin{proof}[Proof of Theorem \ref{main}]
Denote by $I$ the set of isomorphism classes of irreducible $\Gamma$-modules.
As before, we will identify $K_0(H_\lambda)$ (for any $\lambda$) with  $K_0(\mathbb{C}[\Gamma])=\mathbb{Z}^I$. 
Let $$F:H_c-mod\to H_{c'}-mod$$
be a Morita equivalence.
It follows that there exists a finitely generated ring
$S\subset\mathbb{C}$, containing entries of $c, c',$ such
that $F$ is a $S$-linear Morita equivalence.
Making $S$ large enough so that $K_0(S[\Gamma])=\mathbb{Z}^I$,
we get an isomorphism of the Grothendieck groups $$K_0(F): K_0(H_{c})=\mathbb{Z}^I\cong K_0(H_{c'})=\mathbb{Z}^I.$$
Then by using Proposition \ref{Mitya}, we get that for any homomorphism $\chi: S\to\mathbb{F}_p$ (for $p\gg 0$) there exist (recall that $H_0=A(V)\#\Gamma)$ 
$w, w'\in W_{\text{ext}}\subset \Aut(\mathbb{Z}^I)$ 
and  Morita equivalences  
$$G_1: H_{\chi(c)}-mod\to H_{0}-mod,\quad G_2:H_{\chi(c')}-mod\to H_{0}-mod,$$
such that $K_0(G_1)=w$ and $K_0(G_2)=w'.$
Thus, combining above equivalences we have the following auto-equivalence
$$H=G_1F_{\chi}G_2:A(V)\#\Gamma-mod\cong A(V)\# \Gamma-mod.$$
Moreover, on the level of the center $H$ preserves the Poisson bracket.
Now, by Theorem \ref{Mitya} we can conclude that $K_0(H)\in \Aut(\mathbb{Z}^{I})$ is a diagram automorophism.
So, there exists a diagram automorphism $\rho$ and $w\in W_{ext}$ (depending on $\chi)$, such that $$K_0(F)=w\rho\in W_{ext},\quad \chi(c')=\rho w(\chi(c)).$$ 
Now, assume that $\lambda'\neq \rho K_0(F)(\lambda)$ for any diagram automorphism $\rho.$ 
Then since the number of diagram automorphisms is finite, using Theorem \ref{Chebotarev} we
may conclude that there are infinitely many primes $p$ for which there exists a homomorphism $\chi:S\to\mathbb{F}_p$, so that
$\chi(c')\neq\rho K_0(F)\chi(c)$ for any diagram automorphism $\rho$, which is a contradiction.
\end{proof}

\section{Application to generalized Weyl algebras}

In this section we use Theorem \ref{key} to solve the Morita equivalence problem
for noncommutative deformationas of type A  Kleinian singularities, also known as (classical) generalized Weyl algebras. In order to do so, we need to compute the action
of reflections and diagram automorphisms in terms of parameters of generalized Weyl algebras.

Recall that we want to show that given $v, v'\in\mathbb{C}[h]$ polynomials of degree $m$ with no multiple roots,
then $A(v)$ is Morita equivalent of $A(v')$ if and only if up to a permutation and a sign roots of $v$ and $v'$ differ by integers (Theorem \ref{GWA}).
It is more convenient to work with roots of $v, v'$, thus we put $v=\prod_{i=0}^{m-1} (h-t_i), v'=\prod_{i=0}^{m-1}(h-t'_i)$ (since without loss
of generality $v, v'$ are monic)
and we denote $A(v)$ by $A_t,$ (respectively $A(v')$ by $A_{t'}.$)

Denote by $G\leq \Aut(\mathbb{C}^m)$ the subgroup generated by translations by $\mathbb{Z}^m$, translations by the diagonal, permutations and
the sign change $v\to-v.$ Thus, to show the theorem, we need to prove that if
$A_t$ and $A_{t'}$ are Morita equivalent, then $t'=\gamma(t)$ for some $\gamma\in G.$
We start by recalling the dictionary between generalized Weyl algebras $A_t$ and deformed preprojective algebras
of the extended Dynkin diagram of type A.

Without loss of generality, we may assume that both $t, t'$ have 1 as the last coordinates.

From now on, let $Q$ denote the quiver corresponding to the extended
Dynkin diagram $\tilde{A}_m$, which is a cycle with $m$ vertices. 
(This corresponds to the cyclic group $\Gamma$ of order $m$.)
Thus we have an arrow $e_i\to e_{i+1}$ for $i\in\mathbb{Z}/m\mathbb{Z}.$
So, the Ringel form on $Q$ on coordinate vectors are given by 
$$(\epsilon_i, \epsilon_{i\pm 1})=-1, (\epsilon_i, \epsilon_i)=2.$$
The corresponding reflections are given by the following formulas
$$s_i: \mathbb{Z}^m\to\mathbb{Z}^m: (s_i(\alpha))_i=\alpha_{i+1}+\alpha_{i-1}-\alpha_i, s_i(\alpha)_j=\alpha_j, i\neq j.$$
$$r_i: \mathbb{C}^m\to \mathbb{C}^m: (r_i(\lambda))_i=-\lambda_i, r_i(\lambda)_{i\pm 1}=\lambda_i+\lambda_{i\pm 1}.$$
We have that $\delta=(1,\cdots, 1).$

It is well-known that a generalized Weyl algebra $A_t$ can be identified
with $\mathcal{O}^{\lambda}=e_0\Pi^{\lambda}(Q)e_0$ for an appropriate
$\lambda.$
Indeed, let $\lambda=(\lambda_0,\cdots,
\lambda_{n-1})\in\mathbb{C}^{n},$ so that $\lambda\cdot\delta=1.$ Define 
$t({\lambda})\in\mathbb{C}^m$ as follows
 $$t_i(\lambda)=\sum_{j\leq i}\lambda_j,\quad \lambda_i=t_i-t_{i-1}.$$
 Then we have that $\mathcal{O}^{\lambda}\cong A_{t(\lambda)}$ (see for example \cite{M}.)
A direct computation shows the following:
$$t_j({r_i(\lambda)})=t_j(\lambda), j\notin\lbrace i-1, i\rbrace, t_{i-1}(r_i(\lambda))=t_i, t_{i}(r_i(\lambda))=t_{i-1},\quad 0<i<m-1,$$
$$t_0(r_0(\lambda))=-t_0, t_i(r_0(\lambda))=t_i-t_0, i>0,$$
$$t(r_{m-1}(\lambda))=(t_0+1-t_{m-2},\cdots, t_{m-3}+1-t_{m-2}, 2-t_{m-2}, 1).$$

Denote by $\rho$ and $\tau$ the following diagram automorphisms of $Q$
$$\rho(\lambda)=(\lambda_1. \lambda_2, \cdots, \lambda_{m-1}, \lambda_0), \quad\tau(\lambda)=(\lambda_{m-1}, \lambda_{m-2}, \cdots, \lambda_0).$$
Then
$$t(\rho(\lambda))=(t_1-t_0, t_2-t_0,\cdots, t_{m-2}-t_0, 1-t_0, 1), $$
$$t(\tau(\lambda))=(1-t_{m-2}, 1-t_{m-3},\cdots, 1-t_0, 1).$$
Recall that $W_{ext}$ denotes the subgroup of automorphisms of $\mathbb{C}^m$ generated by diagram automorphisms and reflections corresponding to the quiver $Q_m.$
Now, it follows easily from the above formulas that for any $\gamma\in W_{ext}$, we have that $t(\gamma(\lambda))\in G
t(\lambda).$ Put $t=t(\lambda), t'=t(\lambda').$ Hence $H_{\lambda}$ and
$H_{\lambda'}$ are Morita equivalent. Therefore by Theorem \ref{main} there exists $\gamma\in W_{ext}$ such that $\lambda'=\gamma(\lambda).$
So $t'\in G t$ and we are done.
\begin{remark}

In the case of a generic parameter $\lambda$ (when gl.dim$(\mathcal{O}^{\lambda})=1$), there is a classification result of
Baranovsky, Ginzburg and Kuznetsov  \cite{BGK} of isomorphism classes of indecomposable projective $\mathcal{O}^{\lambda}$-modules in terms
of certain quiver varieties. This bijection was upgraded to an $\Aut(A(v))$-equivariant bijection by F.Eshmatov \cite{E} in the case of type A singularities.
Combining these results with a transitivity of the action of $\Aut(A(v))$ on those quiver varieties (proved in \cite{CEET}), theorem \ref{GWA} can be deduced
for generic parameters (see \cite{T}).
An advantage of the proof in this paper is that we do not need to utilise above mentioned nontrivial results.

\end{remark}

\section{Rational Cherednik algebras of type A}

In this section we apply the approach in the previous ones to rational Cherednik algebras of type A. 
Let us recall the precise definition. Let $\mathfrak{h}\subset\mathbb{C}^n$ be the standard $n-1$-dimensional representation of $S_n.$
Let $t\in\mathbb{C}.$ Then the corresponding rational Cherednik algebra $H_t, t\in\mathbb{C}$ is defined a symplectic reflection algebra
of the pair $(S_n, \mathfrak{h}\oplus\mathfrak{h}^*).$ In this case the space of parameters is one dimensional.
In regards to the Morita classification problem for $H_c$, it was proved by Berest, Etingof, and  Ginzburg \cite{BEG} that if 
$c$ is transcendental then $H_c$ is Morita equivalent to $H_{c'}$ (for any $c'$)
if and only if $c'\in \pm c+\mathbb{Z}.$ They have also explained that if $H_c, H_{c'}$ are Morita equivalent, then $\mathbb{Q}(c)=\mathbb{Q}(c').$

Just as before, translation functors play the
key role. Let us start by recalling the Heckman-Opdam shift  (translation) functors for $H_c.$ Recall once again that $B_c=eH_ce,$ where $e$ is the symmetrizer idempotent.
Recall that a parameter $c$ is called spherical if $H_c=H_ceH_c$, equivalently if $H_c$ is Morita equivalent to $B_c.$ 
It was proved in \cite{BEG} that $B_c\cong B_{-1-c}$ for all $c.$
Combining this with the isomorphism $H_c\cong H_{-c}$ given by the twist by the sign character
of $S_n$ and the Morita equivalence between $H_c$ and $eH_ce$  we
obtain the desired translation equivalence (for generic enough $c$) $$S_c: H_c-mod\cong H_{c+1}-mod.$$
Denote by $I$ the set of isomorphism classes of $S_n$-modules. So, we may identify
$K_0(H_c)$ with $K_0(\mathbb{C}[S_n])=\mathbb{Z}^I$ in the standard manner.
We recall that on the level of $K$-groups 
$$K_0(S_c): K_0(H_c)=\mathbb{Z}^I\to K_0(H_{c+1})=\mathbb{Z}^I$$ is independent of
$c$ and we denoted it by $S\in\Aut_{\mathbb{Z}}(\mathbb{Z}^I).$

Next we view $H_c$ as a $\mathbb{Z}[c]$-algebra.
It is known that \cite{BFG} $H_c/H_ceH_c$ is a finite torsion $\mathbb{Z}_l[c]$-module (for some $l>1)$.
Hence, it follows that $H_c/H_ceH_c$ is annihilated by some $g\in \mathbb{Z}[c]$ and roots of $g$ are precisely
aspherical values of $H_c$. Now we recall that the following is the set of all aspherical values of $H_c$ \cite{BE}
$$Q_n=\lbrace -\frac{i}{d}, 1\le i<d, 2\leq d\leq n\rbrace.$$

Let $h=\prod_{\alpha\in Q_n}(c-\alpha)^t$ for large enough $t.$ So, $h\in Ann(H_c/H_ceH_c).$
Thus, we conclude that the following holds.
For all $p\gg 0$ and any $t, t'\in [0, p-1]$ that belong to the same connected component of $[0, p)\setminus (Q_n\mod p),$
where elements of $Q_n \mod p$ are identified with integers in $[0, p),$ then we have the translation
Morita equivalence over $\mathbb{F}_p$ $$S_{t}^{t'}: H_t-mod\to H_{t'}-mod.$$
Moreover $K_0(S_{t}^{t'})=S^{t'-t}.$

\begin{theorem}

Let $c=\frac{a}{l}, c'=\frac{a'}{l'}$ be a pair of rational numbers in reduced form such that
$(ll', n!)=1.$ Assume that either $(l, l')=1$ or $l=l'$ and there exists an integer $\alpha$ so that $a'=\alpha a\mod l, l< n\alpha.$  
Then $H_c$ is Morita equivalent to $H_{c'}$ if and only if $c'\in \pm c+\mathbb{Z}.$
In particular, if $c=\frac{a}{l}$ with $(l, n!)=1,$ then $H_c$ is Morita equivalent to $H_m$ for any $m\in\mathbb{Z}$ if and only if $c\in\mathbb{Z}.$

\end{theorem}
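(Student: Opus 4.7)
The plan mirrors the proof of Theorem \ref{main}: spread the given Morita equivalence $F: H_c\text{-mod}\to H_{c'}\text{-mod}$ over a finitely generated $\mathbb Z$-subalgebra $R\subset\mathbb C$ containing $c$ and $c'$; by Theorem \ref{Chebotarev}, for infinitely many primes $p$ we obtain a ring homomorphism $\chi: R\to\mathbb F_p$ and a reduction $F_\chi: H_{\bar c}\text{-mod}\to H_{\bar c'}\text{-mod}$ over $\mathbb F_p$, with $\bar c=\chi(c), \bar c'=\chi(c')$; and then exploit the rigidity provided by Theorem \ref{key} applied at the base point $H_0=A(\mathfrak h\oplus\mathfrak h^*)\#S_n$. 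The hypothesis $(ll',n!)=1$ ensures that for $p\gg 0$ coprime to $ll'$, both $\bar c$ and $\bar c'$ lie outside $Q_n\bmod p$, since the denominators of elements of $Q_n$ all lie in $\{2,\dots,n\}$ and are therefore coprime to $l, l'$.

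For such $p$, I would compose $F_\chi$ with the elementary Morita equivalences recalled earlier: the translation functors $S_t^{t'}: H_t\to H_{t'}$, with $K_0$-action $S^{t'-t}$, available whenever $t,t'$ lie in the same connected component of $[0,p)\setminus(Q_n\bmod p)$; and the sign-twist isomorphism $H_t\cong H_{-t}$, with $K_0$-involution $\sigma$. Provided chains of such moves exist taking $\bar c\rightsquigarrow 0$ and $\bar c'\rightsquigarrow 0$, the composition yields a Morita auto-equivalence $E_\chi$ of $H_0$ over $\overline{\mathbb F_p}$ that preserves the Poisson bracket on the center (each ingredient does). Theorem \ref{key} then forces $K_0(E_\chi)$ to be a diagram automorphism of the pair $(\mathfrak h\oplus\mathfrak h^*, S_n)$, and hence to lie in a finite subgroup $D\subset\Aut(\mathbb Z^I)$ generated by the $K_0$-actions of the normalizer of $S_n$ in $Sp(\mathfrak h\oplus\mathfrak h^*)$ together with character twists.

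Unwinding the composition gives a presentation $K_0(F_\chi)=S^{k_p}\sigma^{\epsilon_p}d_p\sigma^{\epsilon_p'}S^{k_p'}$ for some $d_p\in D$, integers $k_p, k_p'$, and $\epsilon_p,\epsilon_p'\in\{0,1\}$. Since $K_0(F_\chi)$ is the mod-$p$ reduction of the fixed automorphism $K_0(F)\in\Aut(\mathbb Z^I)$, while $D$ and the parity choices are finite and distinct integer powers of $S$ remain distinct over $\mathbb Z^I$ (hence over $\mathbb F_p^I$ for $p\gg 0$), a pigeonhole pass to an infinite subsequence of primes produces a fixed tuple $(k,k',\epsilon,d)$ valid for infinitely many $p$. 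This yields a congruence $\bar c'\equiv\epsilon\bar c+(k+k')\pmod p$ with $\epsilon\in\{\pm 1\}$ for infinitely many $p$, which lifts to $c'=\epsilon c+(k+k')$ in $\mathbb C$, establishing $c'\in\pm c+\mathbb Z$. The ``in particular'' statement follows immediately by taking $c'=m\in\mathbb Z$: the hypothesis $(l,l')=(l,1)=1$ is automatic, so the conclusion forces $c\in\mathbb Z$.

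The principal obstacle is the construction of simultaneous shift-chains in the second paragraph: for infinitely many primes $p$ one must arrange that $\bar c$ and $\bar c'$ both lie in connected components of $[0,p)\setminus(Q_n\bmod p)$ reachable from $0$ via bounded-length integer-shift chains, possibly after one sign flip. The density of integer translates of $\bar c, \bar c'$ relative to $Q_n\bmod p$ depends sensitively on the denominators $l, l'$, and the case dichotomy in the hypothesis---either $(l,l')=1$, or $l=l'$ with $l<n\alpha$ for some $\alpha$ realizing $a'\equiv\alpha a\pmod l$---is precisely the arithmetic condition that makes this density argument succeed for infinitely many primes.
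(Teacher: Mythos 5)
Your overall architecture matches the paper's: spread $F$ over a finitely generated ring, reduce modulo large primes, shift both reduced parameters to $0$ by translation functors, apply Theorem \ref{key} to the resulting element of $\Pic(A(V)\#S_n)$, and use the rigidity of $K_0(F)$ across infinitely many primes to lift the congruence $\bar c'\equiv\pm\bar c+k$ to an equality over $\mathbb{Q}$. That last pigeonhole step, and the identification of the diagram automorphisms of $(\mathfrak h\oplus\mathfrak h^*,S_n)$ with sign twists, are fine.

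However, there is a genuine gap exactly where you flag ``the principal obstacle'': you never construct the shift chains connecting $\bar c$ and $\bar c'$ to $0$, and the tool you invoke for producing primes --- Theorem \ref{Chebotarev} --- cannot do it. For rational $c=a/l$, a homomorphism $R\to\mathbb F_p$ sends $c$ to $a\cdot l^{-1}\bmod p$, and with no control over $p\bmod l$ this residue can land anywhere in $[0,p)$, in particular in a component of $[0,p)\setminus(Q_n\bmod p)$ far from $0$ and not reachable by any bounded-length chain of shifts and sign flips. The paper's proof replaces Chebotarev by Dirichlet's theorem on primes in arithmetic progressions: one chooses $p\equiv 1\pmod{n!}$ and $p\equiv -a\pmod l$ (and additionally $p\equiv -a'\pmod{l'}$ in the case $(l,l')=1$). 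Then $\bar c=(p+a)/l$ and $\bar c'=(p+a')/l'$ (or $\bar c'=\alpha(p+a)/l+\beta$ in the equal-denominator case) are explicit integers lying in $[0,\tfrac{p-1}{n})$, while every element of $Q_n\bmod p$ equals $i\tfrac{p+1}{d}$ up to the normalization used and is of size at least roughly $\tfrac{p}{n}$; hence $0$, $\bar c$, $\bar c'$ all lie in a single connected component and one translation functor $S_0^{\bar c}$ (resp. $S_0^{\bar c'}$) suffices, with $K_0=S^{\bar c}$ known exactly. This is precisely where the hypotheses $(ll',n!)=1$ and the dichotomy on $l,l'$ enter, and it is the mathematical core of the theorem; asserting that ``the density argument succeeds'' without exhibiting the congruence conditions on $p$ leaves the proof incomplete at its central point. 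Once that step is supplied, the rest of your argument goes through essentially as in the paper.
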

\begin{proof}

Recall that we have a polynomial $h=\prod_{\alpha\in Q_n}(c-\alpha)^t$
with a property that for any $p\gg 0$ and $c\in\mathbb{F}_p$
such that $h(c)\neq 0,$ then $H_c$ and $H_{c+1}$ are Morita equivalent via the standard translation functor.

Suppose that $c'$ and $c$ have the same denominator $l$ and $(l, n!)=1.$ Also, $c'=\alpha c+\beta$ for integers $\alpha, \beta,$ such that $l<\alpha n.$
Let $p\gg 0$ be a prime so that $$p=1\mod n!, \quad p=-a \mod l.$$ 
There are infinitely many such primes $p$ by the Dirichlet theorem. Given $x\in\mathbb{Q}$, denote by $\bar{x}$ its image in
$\mathbb{F}_p.$
Then $$\bar{c}=\frac{p+a}{l},\quad \bar{c'}=\alpha\frac{p+a}{l}+\beta.$$ 
So $\bar{c}, \bar{c'}\in [0, \frac{p-1}{n}).$ In particular, $0, \bar{c}, \bar{c'}$ belong to the same
connected component of  $[0, p-1)\setminus Q_n\mod p.$
Thus, both $H_{\bar{c}}, H_{\bar{c'}}$ are Morita equivalent with $H_0=A(V)\# S_n$ using the standard translation functors:
$$S_0^{\bar{c}}:H_0-\text{mod}\to H_{\bar{c'}}-\text{mod}, \quad S_{0}^{\bar{c'}}: H_0-\text{mod}\to H_{\bar{c'}}-\text{mod}.$$
Let $$F: H_c-mod\to H_{c'}-mod$$ be a Morita equivalence.
So, $F$ is defined over a sufficiently large finitely generated $\mathbb{Z}$-subalgebra $R$ of $\mathbb{C}.$ Then for all $p\gg 0$, there exists a base change 
$R\to \bar{\mathbb{F}_p}$ and the corresponding
Morita equivalence $$\bar{F}:H_{\bar{c}}-mod\to H_{\bar{c'}}-mod.$$
Moreover $K_0(F)=K_0(\bar{F})\in \Aut(Z^I)$  is independent of $p.$
Thus, combining above equivalences, we get that
$$S_0^{\bar{c}}\bar{F}S_{\bar{c'}}^{0}\in\Pic(A(V)\#S_n).$$
Denote by $\epsilon\in \Aut(\mathbb{Z}^I)$ the element corresponding to the action on $K_0$
by the automorphism induced by
the sign character of $S_n.$ 
So, by using Theorem \ref{key} we may conclude that
$K_0(F)=\epsilon^i S^{\bar{c}-\bar{c'}},$ where $i$ is 0 or 1.
So $\bar{c}-\bar{c'}=\pm t$ for a fixed $t\in\mathbb{Z}.$ Hence $c'\in\pm c+\mathbb{Z}$ as desired.

Now suppose $(l, l')=1.$ Let $p\gg 0$ be a prime so that $$p=-a\mod l,\quad -a'\mod l',\quad 1\mod n!.$$ 
Then $$\bar{c}=\frac{p+a}{l},\quad \bar{c'}=\frac{p+a'}{l'},\quad \bar{\frac{i}{d}}=i\frac{p+1}{d},$$
So, just like in the previous case, $\bar{c}, \bar{c'}\in [0, \frac{p-1}{n})$ and $H_c, H_{c'}$ are Morita equivalent via the translation functors,
and the similar argument as above applies.
\end{proof}



\end{document}